




 
\documentclass[12pt,sn-mathphys-ay]{sn-jnl}

\usepackage{graphicx}%
\usepackage{multirow}%
\usepackage{amsmath,amssymb,amsfonts}%
\usepackage{amsthm}%
\usepackage{mathrsfs}%
\usepackage[title]{appendix}%
\usepackage{xcolor}%
\usepackage{textcomp}%
\usepackage{manyfoot}%
\usepackage{booktabs}%
\usepackage{algorithm}%
\usepackage{algorithmicx}%
\usepackage{algpseudocode}%
\usepackage{listings}%
\usepackage{hyperref}
\usepackage{dsfont}
\theoremstyle{thmstyleone}%
\newtheorem{theorem}{Theorem}
\newtheorem{theoremletter}{Theorem}

%
%
\newtheorem{lemmaletter}[theoremletter]{Lemma}

\theoremstyle{thmstyletwo}%
\newtheorem{remark}{Remark}%

\theoremstyle{thmstylethree}%
\raggedbottom

\begin{document}
\newcommand{\R}{\mathds{R}}
\newcommand{\M}{M^{+}(\R^n)}
\newcommand{\wa}{\mathbf{W}_{\alpha,p}}
\newcommand{\wao}{\mathbf{W}_{\alpha,p}}
\newcommand{\wat}{\mathbf{W}_{\alpha,p}}
\newcommand{\wak}{\mathbf{W}_{k}}
\newcommand{\wah}{\mathbf{W}_{\frac{2k}{k+1},k+1}}
\newcommand{\ia}{\mathbf{I}_{2\alpha}}
\newcommand{\Om}{\Omega}
\newcommand{\e}{\epsilon}
\newcommand{\phik}{\Phi^{k}(\Om)}
\newcommand{\la}{\lambda}

\title{On a fully nonlinear k-Hessian system of Lane-Emden type}
\author{\fnm{Genival} \sur{da Silva}\footnote{email: gdasilva@tamusa.edu, website: \url{www.gdasilvajr.com}}}
\affil{\orgdiv{Department of Mathematics}, \orgname{Texas A\&M University - San Antonio}}


\abstract{In this manuscript we prove the existence of solutions to a fully nonlinear system of (degenerate) elliptic equations of Lane-Emden type and discuss a inhomogeneous generalization.}

\keywords{Wolff potentials, k-Hessian, degenerate elliptic, nonlinear systems}


\pacs[MSC Classification]{35J70, 35B09, 35J47,31C45}

\maketitle
\section{Introduction}\label{sec1}
We will study the following fully nonlinear (degenerate) Elliptic system:
\begin{equation}\label{FNS}\tag{$S$}
     \left\{
\begin{aligned}
& F_{k}[-u]= \sigma\, v^{q_1}, \quad v>0 \quad \mbox{in}\quad \mathbb{R}^n,\\ 
& F_{k}[-v]= \sigma\, u^{q_2}, \quad u>0 \quad \mbox{in}\quad \mathbb{R}^n,\\
& \liminf\limits_{|x|\to \infty}u(x)=0, \quad \liminf\limits_{|x|\to \infty}v(x)=0.
\end{aligned}
\right.
 \end{equation}
 where $\sigma\in M^{+}(\mathds{R}^{n})$ is a nonnegative Radon measure, $ \, 0<q_i<k<\frac{n}{2}$ for $ i=1,\, 2$, and $F_{k}[u]$ is the k-Hessian operator, defined as the sum of the k-minors of $D^{2}u$.
 
 The equation $F_{k}[u]=f(x,u)$ in a domain $\Om\subseteq \R^{n}$ actually is variational. It's the Euler-Lagrange equation of the functional
 $$I_{k}[u]=\frac{1}{k+1}\int_{\Om} u_{i} u_{j} F^{ij}_{k}[u]-\int_{\Om} F(x,u),$$
where $F^{ij}_{k}$ represents the derivative with respect to the entry $a_{ij}$ and $F(x,u)$ an antiderivative with respect to $u$. 

Similarly to the linear case, when we study the variational problem associated with the k-Hessian equation we divide the problem into three cases, the sublinear case, the eigenvalue problem, and the superlinear case, depending on the behavior of $f(x,u)$. In this manuscript we restrict ourselves to the case of systems with $f(x,u)=u^{q}$, with $q<k$. However, the results presented here could be generalized to other types of $f(x,u)$ having similar growth rate.

 In order for this system \eqref{FNS} to make sense in case $u$ is not regular enough, we consider it in the sense of k-subharmonic functions and k-Hessian measures as defined in \cite{wang1999}. 
 
 More precisely, an upper-semicontinuous function is said to be \textit{k-subharmonic} in $\Om\subseteq \R^{n}$, $1\leq k \leq n$, if $F_{k}[q]\geq 0$ for any quadratic polynomial $q(x)$ such that $u(x)-q(x)$ has a local finite maximum in $\Om$. In case, $u\in C^{2}_{loc}(\Om)$ then $u$ is k-subharmonic if and only if $F_{j}[u]\geq 0$ for $j\leq k$. In particular, all k-subharmonic functions are subharmonic in the usual sense and if $k=n$ then they are also convex. 
 
 The set of all k-subharmonic functions in $\Om$ will be denoted by $\phik$.
 
 We recall the following theorem:
 \begin{theorem}[\cite{wang1999}]\label{thm1}
For any k-subharmonic function u, there exists a Radon measure $\mu_{k}[u]$, called \textit{the k-Hessian measure} such that
\begin{enumerate}
	\item[i.] If $u\in C^{2}(\Om)$ then $\mu_{k}[u]=F_{k}[u]dx$; and
	\item[ii.] If $u_{j}$ is a sequence of k-subharmonic functions which converges to $u$ a.e., then $\mu_{k}[u_{j}]\to \mu_{k}[u]$ weakly as measures.
\end{enumerate}
 \end{theorem}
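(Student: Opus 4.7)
The plan is to reduce the general statement to the smooth case via a mollification procedure combined with local integral estimates on the $k$-Hessian. For $u \in \phik \cap C^{2}(\Om)$, the definition $\mu_{k}[u] := F_{k}[u]\,dx$ is already a nonnegative Radon measure (nonnegativity uses that $F_{j}[u] \geq 0$ for $j \leq k$ in $\phik \cap C^{2}$), so part (i) holds by construction. The task is then to (a) extend the definition to arbitrary $u \in \phik$ and (b) verify weak continuity along a.e.-convergent sequences.

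The first technical ingredient I would establish is a local a priori estimate: for any $u \in \phik \cap C^{2}(\Om)$ and concentric balls $B_{r} \Subset B_{R} \Subset \Om$,
$$\int_{B_{r}} F_{k}[u]\,dx \leq C(n,k,r,R)\,\|u\|_{L^{\infty}(B_{R})}^{k}.$$
This follows from the divergence-free property of the cofactor matrix $F^{ij}_{k}[u]$ combined with iterated integration by parts against a suitable cutoff, a calculation standard in the theory of $k$-Hessian equations.

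Next I would approximate $u \in \phik$ by smooth $k$-subharmonic functions via the mollifications $u_{\e} = u \ast \rho_{\e}$. These are smooth and remain $k$-subharmonic on $\{x : \mathrm{dist}(x,\partial\Om) > \e\}$, since the set of symmetric matrices whose eigenvalues lie in the G\aa rding cone $\Gamma_{k}$ is a convex cone and $k$-subharmonicity is preserved under translation and averaging against a nonnegative kernel. One has $u_{\e} \searrow u$ a.e.\ as $\e \to 0$, and the a priori estimate above yields uniform local bounds on $F_{k}[u_{\e}]\,dx$. Weak-$\ast$ compactness of Radon measures then provides a limit $\mu$ along a subsequence, which we take as the candidate $\mu_{k}[u]$.

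The main obstacle, and the heart of the theorem, is showing that this limit is independent of the approximating sequence, which is exactly what makes $\mu_{k}[u]$ well-defined and delivers the weak continuity (ii). For this I would invoke the comparison principle for the $k$-Hessian together with the solvability of the Dirichlet problem $F_{k}[w]=0$ on small balls with prescribed continuous boundary data: given two approximating sequences $u_{j},\tilde{u}_{j}$ converging a.e.\ to $u$, one brackets both sequences between $k$-harmonic replacements on each ball and lets the data converge, identifying the two weak limits. Once uniqueness of the limit measure is established, property (ii) for an arbitrary a.e.-convergent $\{u_{j}\} \subset \phik$ follows by a diagonal argument: mollify each $u_{j}$ at scale $\e_{j} \to 0$ and apply uniqueness to identify $\lim \mu_{k}[u_{j}]$ with $\mu_{k}[u]$.
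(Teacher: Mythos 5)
This theorem is not proved in the paper; it is recalled verbatim from \cite{wang1999} (Trudinger and Wang's work on Hessian measures) and used as a black box, so there is no internal proof to compare your attempt against. That said, your sketch does reconstruct the broad architecture of the Trudinger--Wang argument: define $\mu_{k}[u]=F_{k}[u]\,dx$ in the smooth case, establish local $L^{1}$ bounds on $F_{k}[u]$ in terms of $\|u\|_{L^{\infty}}^{k}$ via the divergence structure of the cofactor matrix, mollify to produce a decreasing smooth approximating sequence, and pass to a weak-$\ast$ limit.

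Two points would need substantial strengthening before this counts as a proof rather than a roadmap. First, the uniqueness step is the real content of the theorem, and ``bracket both sequences between $k$-harmonic replacements and let the data converge'' is not how Trudinger and Wang actually handle it: their argument is an induction on $k$ showing, via iterated integration by parts against a cutoff, that for a monotone decreasing sequence $u_{m}\searrow u$ of smooth $k$-subharmonic functions the numbers $\int\eta\,F_{k}[u_{m}]\,dx$ form a Cauchy sequence for each fixed $\eta\in C_{c}^{\infty}$, and that the resulting limit is independent of the particular monotone approximation. Second, your diagonal argument for part (ii) is underspecified: choosing $\e_{j}\to 0$ does not by itself give $u_{j}^{\e_{j}}\to u$ in a sense strong enough to identify the limit measure, and passing from $\mu_{k}[u_{j}^{\e_{j}}]\to\mu_{k}[u]$ to $\mu_{k}[u_{j}]\to\mu_{k}[u]$ requires a quantitative rate (uniform in $j$) at which $\mu_{k}[u_{j}^{\e}]\to\mu_{k}[u_{j}]$, which is exactly what the local estimates plus the Cauchy property are used to supply. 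Note also that the weak continuity in \cite{wang1999} is stated under convergence in $L^{1}_{\mathrm{loc}}$ (or locally uniformly), not bare a.e.\ convergence; some locally uniform boundedness is implicitly needed to make the statement, and hence any proof of it, go through.
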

 
 Hence, system \eqref{FNS} makes sense even in case $u,v$ are not $C^{2}$ but are k-subharmonic, as long as we understand $F_{k}[u]$ as $\mu_{k}[u]$.
 
 The case $k=1$ of system \eqref{FNS} , i.e.  $F_{1}[u]=\Delta u$, and its $p$-laplacian generalization, have been addressed recently in \cite{bib1}, where the authors obtained Brezis-Kamin type estimates \cite{brezis92} for a quasi-linear system similar to the one being discussed here. In \cite{dasilva24}, we analyzed the Fractional Laplacian equivalent of \eqref{FNS}. We plan to generalize this circle of ideas and discuss the fully nonlinear case when $1<k<\frac{n}{2}$. 
 
 For more on the k-Hessian equation see the wonderful monograph \cite{wang2009}.
 
 System \eqref{FNS} above is related to the following system
 \begin{equation}\label{SW}\tag{$W$}
     \left\{
\begin{aligned}
& u= \wah\left(v^{q_1}\mathrm{d} \sigma\right), \quad \mathrm{d}\sigma \mbox{-a.e in } \mathds{R}^n,\\
& v= \wah\left(u^{q_2}\mathrm{d} \sigma\right), \quad \mathrm{d}\sigma \mbox{-a.e in } \mathds{R}^n.
\end{aligned}
\right.
 \end{equation}
Where $ \wa\mu$ is the Wolff Potential, defined by
\begin{equation}\label{wolff potential}
    \wa\mu(x)=\int_0^{\infty} \left(\frac{\mu(B(x,t))}{t^{n-\alpha p}}\right)^{\frac{1}{p-1}}\frac{\mathrm{d} t}{t}, \quad x\in\mathds{R}^n.
\end{equation}
and consequently,
\begin{equation}\label{wolff potential}
    \wah\mu(x)=\int_0^{\infty} \left(\frac{\mu(B(x,t))}{t^{n-2k}}\right)^{\frac{1}{k}}\frac{\mathrm{d} t}{t}, \quad x\in\mathds{R}^n.
\end{equation}

 \noindent\textbf{Notation:} For the sake of simplicity will denote $\wah\sigma(x)$ by $\wak\sigma(x)$.

In this paper we will only consider measures $\mu\in  {M}^+(\mathds{R}^n)$ who satisfy
\begin{equation}\label{FIN}\tag{FIN}
    \wa\mu(x) < \infty \;\text{ a.e. in } \mathds{R}^n
\end{equation}
The following theorem is an adaptation, for our purposes, of \cite[Thm.~1.1]{bib1}.
\begin{theorem}\label{doo}
Let $1\leq k < \frac{n}{2}$ and consider $\sigma\in  {M}^+(\mathds{R}^n)$ satisfying
\begin{equation}\label{cap}\tag{C}
           \sigma(E)\leq C_{\sigma}\,\mathrm{cap}_{\frac{2k}{k+1},k+1}(E) \quad \mbox{for all compact sets } E\subset \mathds{R}^n,
\end{equation}
and \eqref{FIN}, where $\mathrm{cap}_{\alpha,p}(E)$ is the $(\alpha,p)$-capacity defined by
\begin{equation}\label{capa}
    \mathrm{cap}_{\alpha,p}(E)=\inf \{\|f\|_{L^{p}}^p: 
    f \in L^{p}(\mathds{R}^n), \;  f \geq 0, \; \mathbf{I}_{\alpha}f\geq 1\ \mbox{on}\ E\}.
\end{equation}
Then there exists a solution $(u,v)$ to system \eqref{SW} such that 
\begin{equation}\label{est1}
    \begin{aligned}
    & c^{-1}\left(\wak\sigma\right)^{\frac{k(k+q_1)}{k^2-q_1q_2}}\leq u\leq c\left(\wak\sigma + \left(\wak\sigma\right)^{\frac{k(k+q_1)}{k^2-q_1q_2}} \right),\\
   & c^{-1}\left(\wak\sigma\right)^{\frac{k(k+q_2)}{k^2-q_1q_2}}\leq v\leq c\left(\wak\sigma + \left(\wak\sigma\right)^{\frac{k(k+q_2)}{k^2-q_1q_2}} \right),
    \end{aligned}
\end{equation}
where $c=c(n,p,q_1,q_2,\alpha,C_{\sigma})>0$. Furthermore, $u,v \in L_{\mathrm{loc}}^{s}(\mathds{R}^n,\, \mathrm{d} \sigma)$, for every $s>0$.
\end{theorem}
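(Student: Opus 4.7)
The approach I would follow, adapting \cite{bib1}, is a monotone iteration scheme based on sharp pointwise bounds for Wolff potentials under the capacity hypothesis \eqref{cap}. To see where the exponents come from, I would first note that a self-similar ansatz $u \sim c_1 (\mathbf{W}_k\sigma)^a$, $v \sim c_2(\mathbf{W}_k\sigma)^b$ in \eqref{SW} forces the scaling relations $a = \frac{bq_1}{k}+1$ and $b = \frac{aq_2}{k}+1$; this $2\times 2$ linear system has the unique solution $a = \frac{k(k+q_1)}{k^2-q_1q_2}$, $b = \frac{k(k+q_2)}{k^2-q_1q_2}$. Observe that $a,b > 1$, which explains why both $\mathbf{W}_k\sigma$ and $(\mathbf{W}_k\sigma)^a$ must appear in the upper bound of \eqref{est1}, so that the estimate is uniform across the regimes where $\mathbf{W}_k\sigma$ is small or large.

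The main analytic ingredient is a pointwise Wolff-potential estimate of Phuc--Verbitsky type: under \eqref{cap}, for each $0 < s < k$,
$$\mathbf{W}_k\bigl((\mathbf{W}_k\sigma)^s\, d\sigma\bigr)(x) \lesssim (\mathbf{W}_k\sigma)^{s/k+1}(x), \quad \sigma\text{-a.e.},$$
with the reverse inequality holding universally. Granting this, I would set $u_0 := \kappa(\mathbf{W}_k\sigma)^a$, $v_0 := \kappa(\mathbf{W}_k\sigma)^b$ with $\kappa$ small, and iterate $u_{j+1} := \mathbf{W}_k(v_j^{q_1} d\sigma)$, $v_{j+1} := \mathbf{W}_k(u_j^{q_2} d\sigma)$. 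The pointwise estimate and the choice of exponents give $u_0 \leq u_1$, $v_0 \leq v_1$; monotonicity of $\mathbf{W}_k$ then propagates the comparison to every $j$, so the sequences are nondecreasing. A parallel induction, comparing $u_j, v_j$ against the target envelope $C(\mathbf{W}_k\sigma + (\mathbf{W}_k\sigma)^a)$ and exploiting $q_1 q_2/k^2 < 1$, produces a uniform upper bound; monotone convergence then delivers a limit pair $(u,v)$ solving \eqref{SW} with the desired two-sided bounds.

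For the local integrability, I would use the fact that \eqref{cap} is equivalent, via Maz'ya's capacitary strong-type inequality together with Wolff's inequality, to the weighted bound
$$\int_E (\mathbf{W}_k\sigma)^s \, d\sigma \leq C(s, C_\sigma)\, \sigma(E) < \infty$$
for every compact $E\subset\mathbb{R}^n$ and every $s > 0$; combined with the upper bound in \eqref{est1}, this yields $u, v \in L^s_{\mathrm{loc}}(\mathbb{R}^n, d\sigma)$ for all $s > 0$.

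The main obstacle I anticipate is the precise constant bookkeeping in the upper-bound induction. The sublinearity $q_1, q_2 < k$ is indispensable, as it makes the relevant fixed-point equation for the constant solvable in $(0, \infty)$; still, one must verify that starting from a supersolution of the form $C(\mathbf{W}_k\sigma + (\mathbf{W}_k\sigma)^a)$ the iteration remains trapped between $\kappa (\mathbf{W}_k\sigma)^a$ and $C(\mathbf{W}_k\sigma + (\mathbf{W}_k\sigma)^a)$, with constants depending only on $n, k, q_1, q_2, C_\sigma$. Justifying the Phuc--Verbitsky pointwise estimate under \eqref{cap} is a secondary technical step, being by now standard in the Wolff-potential literature.
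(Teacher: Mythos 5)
The paper does not actually prove this statement: Theorem~\ref{doo} is explicitly presented as ``an adaptation, for our purposes, of \cite[Thm.~1.1]{bib1}'', i.e. the special case $\alpha=\tfrac{2k}{k+1}$, $p=k+1$ of the $\mathbf{W}_{\alpha,p}$-system result in that reference, with no proof reproduced here. So the relevant comparison is with the architecture of the cited argument (of which the paper's own proof of Theorem~\ref{gen} is the closest in-text analogue). Measured against that, your sketch follows the standard sub/super-solution iteration, and the exponent bookkeeping ($a=\frac{q_1}{k}b+1$, $b=\frac{q_2}{k}a+1$, with $a=\gamma_1$, $b=\gamma_2$) is correct.

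There is one nontrivial imprecision worth flagging. You state the key upper pointwise estimate as
\begin{equation*}
\mathbf{W}_k\bigl((\mathbf{W}_k\sigma)^s\,d\sigma\bigr)\lesssim(\mathbf{W}_k\sigma)^{s/k+1},
\end{equation*}
but the estimate that is actually provable from \eqref{cap} via Lemma~\ref{lemD} (cf.\ the bounds for $III$ and $IV$ in the proof of Theorem~\ref{gen}) is the two-term envelope
\begin{equation*}
\mathbf{W}_k\bigl((\mathbf{W}_k\sigma)^s\,d\sigma\bigr)\lesssim \mathbf{W}_k\sigma+(\mathbf{W}_k\sigma)^{s/k+1},
\end{equation*}
and it is precisely the linear $\mathbf{W}_k\sigma$ term that produces, after iteration, the shape $c(\mathbf{W}_k\sigma+(\mathbf{W}_k\sigma)^{\gamma_i})$ of the upper bound in \eqref{est1}. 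You do implicitly carry out the induction against the correct two-term envelope, so this is a misstated lemma rather than a broken argument, but as written the single-term estimate would force the upper and lower bounds in \eqref{est1} to be mutually comparable, which the theorem does not assert. Similarly, the integrability inequality you quote should read $\int_{B(x,R)}(\mathbf{W}_k\sigma)^s\,d\sigma\leq c\bigl(\sigma(B(x,2R))+\sigma(B(x,R))\bigr)$ (Lemma~\ref{lemD}), with a dilated ball on the right, not $\sigma(E)$ on the same set; this is enough for $L^s_{\mathrm{loc}}(d\sigma)$ but is not the same bound. With these two corrections your outline matches the cited approach.
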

Our main result is the following theorem which is a direct application of the theorem above:
\begin{theorem}\label{maint}
Let $1< k < \frac{n}{2}$ and consider $\sigma\in  {M}^+(\mathds{R}^n)$ satisfying \eqref{cap} and \eqref{FIN}. Then there exists a solution $(u,v)\in \Phi^{k}(\R^{n})\times \Phi^{k}(\R^{n})$ to Syst. \eqref{FNS} such that 
\begin{equation}\label{est1}
    \begin{aligned}
    & c^{-1}\left(\wak\sigma\right)^{\frac{k(k+q_1)}{k^2-q_1q_2}}\leq u\leq c\left(\wak\sigma + \left(\wak\sigma\right)^{\frac{k(k+q_1)}{k^2-q_1q_2}} \right),\\
   & c^{-1}\left(\wak\sigma\right)^{\frac{k(k+q_2)}{k^2-q_1q_2}}\leq v\leq c\left(\wak\sigma + \left(\wak\sigma\right)^{\frac{k(k+q_2)}{k^2-q_1q_2}} \right),
    \end{aligned}
\end{equation}
where $c=c(n,p,q_1,q_2,\alpha,C_{\sigma})>0$. Furthermore, $u,v \in L_{\mathrm{loc}}^{s}(\mathds{R}^n,\, \mathrm{d} \sigma)$, for every $s>0$.
\end{theorem}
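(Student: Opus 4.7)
The plan is to deduce Theorem \ref{maint} from Theorem \ref{doo} via the Labutin--Phuc--Verbitsky correspondence between $k$-Hessian measures and Wolff potentials, lifting a solution of the integral system \eqref{SW} to a genuine $\Phi^k$-solution of \eqref{FNS}.

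First, applying Theorem \ref{doo} to $\sigma$ yields a pair $(\bar u, \bar v)$ satisfying \eqref{SW} together with the bounds \eqref{est1}. The bridge to the PDE is the following consequence of the $k$-Hessian theory of \cite{wang1999,wang2009} (Labutin's global pointwise estimate combined with Phuc--Verbitsky existence): for every $\mu \in M^+(\R^n)$ with $\wak\mu \not\equiv +\infty$, there is a non-positive $w_\mu \in \Phi^k(\R^n)$ with $\liminf_{|x|\to\infty} w_\mu = 0$, $\mu_k[w_\mu] = \mu$, and $C_0^{-1}\wak\mu \le -w_\mu \le C_0\wak\mu$ for a universal $C_0 = C_0(n,k)$. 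Starting from $(u_0, v_0) := (\bar u, \bar v)$, I would iterate at the PDE level by $u_{n+1} := -w_{\sigma v_n^{q_1}}$ and $v_{n+1} := -w_{\sigma u_n^{q_2}}$; each iterate exactly satisfies its $k$-Hessian equation, and the Labutin bounds combined with \eqref{est1} trap the iterates inside a fixed multiplicative box around $(\bar u, \bar v)$, uniformly in $n$.

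Passing to the limit would use a pointwise a.e.\ convergent subsequence extracted via the uniform local bounds and the compactness properties of $\Phi^k$-functions: the left-hand sides of \eqref{FNS} converge by the weak continuity of the $k$-Hessian measure (Theorem \ref{thm1}(ii)), while the right-hand sides converge by dominated convergence justified by the uniform upper bounds and the $L^s_{\rm loc}(d\sigma)$ integrability from Theorem \ref{doo}. The main obstacle is that the coupled iteration is not monotone: incrementing one coordinate affects the other in opposite directions, so one cannot simply take an increasing limit. I would circumvent this either by pairing up consecutive steps (the doubled map is monotone in each variable separately), or by recasting the problem as a Schauder fixed-point argument on the box $\{(u,v): C^{-1}\bar u \le u \le C\bar u,\ C^{-1}\bar v \le v \le C\bar v\}$ endowed with the $L^s_{\rm loc}(d\sigma)$ topology, where the capacity hypothesis \eqref{cap} supplies the continuity and compactness of the underlying operator, exactly as in the proof of Theorem \ref{doo}.
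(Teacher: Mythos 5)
The direction is right --- pass from the integral system of Theorem~\ref{doo} to the PDE via the Labutin/Phuc--Verbitsky estimates and an iteration --- but there are two substantive problems with the proposal.

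First, and most importantly, your claim that ``the coupled iteration is not monotone'' is false, and the fix you propose (pairing steps or a Schauder argument) is therefore both unnecessary and a departure from a cleaner argument. Because both right-hand sides $\sigma v^{q_1}$ and $\sigma u^{q_2}$ are increasing in the other unknown, the system is cooperative: if $u_{i-1}\le u_i$ and $v_{i-1}\le v_i$, then $\sigma v_{i-1}^{q_1}\le \sigma v_i^{q_1}$ and $\sigma u_{i-1}^{q_2}\le \sigma u_i^{q_2}$, so the comparison principle for the $k$-Hessian gives $u_i\le u_{i+1}$ and $v_i\le v_{i+1}$. The paper exploits exactly this: it starts from the \emph{subsolution} pair $u_0=\lambda(\wak\sigma)^{\gamma_1}$, $v_0=\lambda(\wak\sigma)^{\gamma_2}$ (with $\lambda$ small, via Lemma~\ref{lemB}), and runs a monotone \emph{increasing} iteration bounded above by $(\overline u,\overline v)$. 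Starting instead from $(\overline u,\overline v)$, as you do, does not make the iterates stay in a fixed box: each application of the Labutin two-sided estimate multiplies the comparison constant by $C_0$ raised to a power depending on $q_i/k$, and controlling this requires an explicit sublinearity argument you do not supply. Monotonicity sidesteps all of this, and it also removes the need to extract a.e.\ convergent subsequences: the limit exists pointwise by monotone convergence, and then Theorem~\ref{thm1}(ii) passes the Hessian measures to the limit.

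Second, your ``bridge'' step --- that for every $\mu$ with $\wak\mu\not\equiv\infty$ there is a global $\Phi^k(\R^n)$ function with prescribed Hessian measure $\mu$ and two-sided Wolff bounds --- is essentially the content that must be \emph{proved} here, not invoked. The paper establishes it by solving Dirichlet problems on expanding balls $B_i$ (existence from \cite[Thm.~8.2]{wang2009}, uniqueness from the comparison principle \cite[Thm.~4.1]{wang2002}, which requires the $L^1$-data observation noted in the paper's remark), showing the $u_1^i$ are increasing in $i$ and uniformly bounded by $\overline u$ via \cite[Thm.~7.2]{verb2008}, and then using weak continuity of the Hessian measure. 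Asserting a ready-made global existence-and-estimates correspondence hides the main technical step. So while your overall strategy echoes the paper's, the key mechanism (monotone iteration from a subsolution, built through approximating Dirichlet problems on balls) is missing, and the alternative you propose rests on an incorrect premise about monotonicity and an unjustified compactness claim.
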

In the next theorem we provide a fully nonlinear equivalent result similar to what is presented in \cite{verb17}, \cite{bib1}. 
\begin{theorem}\label{sect}
Let $1< k < \frac{n}{2}$ and consider $\sigma\in  {M}^+(\mathds{R}^n)$ satisfying \eqref{FIN}. Then there exists a solution $(u,v)\in \Phi^{k}(\R^{n})\times \Phi^{k}(\R^{n})$ to System \eqref{FNS} satisfying \eqref{est1} if and only if there exists $\lambda>0$ such that almost everywhere we have
\begin{equation}\label{est2}
    \begin{aligned}
  & \wak\left((\wak\sigma)^{q_{1}\gamma_{2}} \right)\leq \lambda\left(\wak\sigma + \left(\wak\sigma\right)^{\gamma_{1}} \right) <\infty, 
  \\
   & \wak\left((\wak\sigma)^{q_{2}\gamma_{1}} \right)\leq \lambda\left(\wak\sigma + \left(\wak\sigma\right)^{\gamma_{2}} \right) <\infty .
    \end{aligned}
\end{equation}
\end{theorem}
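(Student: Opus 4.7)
The plan is to prove the biconditional in two steps, with the common bridge between the differential system \eqref{FNS} and the integral system \eqref{SW} being the Labutin-type pointwise bound from \cite{wang1999} (already at the heart of Theorem \ref{doo}): any $w\in \Phi^{k}(\R^n)$ with $F_{k}[-w]=\nu$ (interpreted through $\mu_k$ when $w$ is not $C^2$) and $\liminf_{|x|\to\infty}w=0$ satisfies $c^{-1}\wak\nu\le w\le c\,\wak\nu$ for a constant $c=c(n,k)$. This identifies, up to constants, a solution of \eqref{FNS} with a solution of \eqref{SW}.

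For the forward implication I take a solution $(u,v)$ of \eqref{FNS} obeying \eqref{est1}. The Labutin estimate applied to each equation gives $u\asymp\wak(v^{q_1}\,d\sigma)$ and $v\asymp\wak(u^{q_2}\,d\sigma)$. Inserting the lower bound $v\ge c^{-1}(\wak\sigma)^{\gamma_{2}}$ into the first relation produces $\wak\bigl((\wak\sigma)^{q_{1}\gamma_{2}}\,d\sigma\bigr)\lesssim u$, which combined with the upper bound $u\le c\bigl(\wak\sigma+(\wak\sigma)^{\gamma_{1}}\bigr)$ is precisely the first line of \eqref{est2}; finiteness on the right is ensured by \eqref{FIN} together with the explicit growth bound on $u$. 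The second line is obtained by swapping $u\leftrightarrow v$ and $q_{1}\leftrightarrow q_{2}$.

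For the converse I assume \eqref{est2} and construct $(u,v)$ by iterating the operator $T(u,v)=\bigl(\wak(v^{q_1}\,d\sigma),\,\wak(u^{q_2}\,d\sigma)\bigr)$, starting from a carefully chosen pair such as $(u_0,v_0)=\bigl(c_0(\wak\sigma)^{\gamma_1},\,c_0(\wak\sigma)^{\gamma_2}\bigr)$. The estimates \eqref{est2} supply exactly the stability needed to show that $T$ preserves the admissible region
$$\mathcal{A}=\bigl\{(u,v):\,c_{1}(\wak\sigma)^{\gamma_{i}}\le w\le c_{2}\bigl(\wak\sigma+(\wak\sigma)^{\gamma_{i}}\bigr),\; w\in\{u,v\},\; i=1,2\bigr\}$$
for suitable $0<c_{1}<c_{2}$ depending on $\lambda$. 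A monotone-iteration argument (or Schauder--Tychonoff) then yields a fixed point solving \eqref{SW} with the bounds \eqref{est1}. Finally, solving $F_k[-u]=v^{q_1}\,d\sigma$ and $F_k[-v]=u^{q_2}\,d\sigma$ in $\Phi^{k}(\R^n)$ by the existence theory of \cite{wang1999}, and using the Labutin estimate in reverse, transfers the bounds to the k-subharmonic solution of \eqref{FNS}.

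The main obstacle is the upper-bound step in sufficiency: the nested expression $\wak\bigl((\wak\sigma)^{q_{i}\gamma_{j}}\,d\sigma\bigr)$ produced by one round of the iteration must be controlled by $\wak\sigma+(\wak\sigma)^{\gamma_{i}}$, and because the Wolff potential is nonlinear no automatic sub-additivity performs this control. Condition \eqref{est2} is engineered precisely to supply it, and the forward direction shows that the lower Wolff bound on one component feeds back through the coupling into an unavoidable lower estimate of exactly this nested form, so \eqref{est2} is both necessary and sufficient.
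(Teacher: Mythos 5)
Your forward direction is exactly the paper's argument: apply the global pointwise Wolff-potential estimate for $k$-Hessian equations (Lemma \ref{lemA}, which is due to \cite{verb2008} building on Labutin's local estimates, not to \cite{wang1999}) to get $u\gtrsim \wak(v^{q_1}\,d\sigma)$, feed in the lower bound $v\ge c^{-1}(\wak\sigma)^{\gamma_2}$ from \eqref{est1}, and compare with the upper bound on $u$; same for the second line. In the converse you diverge from the paper in one respect: the paper simply invokes \cite[Thm.~1.3]{bib1}, which states that \eqref{est2} is equivalent to the existence of a solution of the integral system \eqref{SW} with the bounds \eqref{est1}, whereas you re-derive that existence by iterating the integral operator $T$ on an invariant region, using Lemma \ref{lemB} for the lower barrier and \eqref{est2} (plus quasi-subadditivity of $\wak$ in the measure, to absorb the $(\wak\sigma)^{q_1}$ term into the $(\wak\sigma)^{q_1\gamma_2}$ term) for the upper barrier. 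That is a legitimate, more self-contained route; what the citation buys the paper is brevity, and what your version buys is that the reader sees exactly where \eqref{est2} enters.

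One step needs more care than you give it. Having a solution $(u,v)$ of \eqref{SW}, you cannot pass to \eqref{FNS} by a single decoupled solve: if you solve $F_k[-\tilde u]=v^{q_1}\,d\sigma$ and $F_k[-\tilde v]=u^{q_2}\,d\sigma$, then Lemma \ref{lemA} only gives $\tilde v\asymp v$ up to multiplicative constants, so $F_k[-\tilde u]=\tilde v^{q_1}\,d\sigma$ generally fails and $(\tilde u,\tilde v)$ does not solve the coupled system. The correct transfer is the successive-approximation scheme of Theorem \ref{maint}: use the \eqref{SW} solution as a supersolution barrier and $(\lambda(\wak\sigma)^{\gamma_1},\lambda(\wak\sigma)^{\gamma_2})$ as a subsolution, solve the coupled Dirichlet problems on the balls $B_i$, extract an increasing limit, and pass to the limit in $i$ and in the iteration index using the weak continuity of the $k$-Hessian measure (Theorem \ref{thm1}). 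Your sketch has all the needed ingredients elsewhere, but as written the last sentence of the converse would not produce a solution of \eqref{FNS}.
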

In section \ref{INC} we analyze a generalization of System \eqref{SW}, we consider
\begin{equation}\label{IW}\tag{$IW$}
     \left\{
\begin{aligned}
& u= \wa\left(v^{q_1}\mathrm{d} \sigma + \mathrm{d} \mu\right), \quad \mathrm{d}\sigma \mbox{-a.e in } \mathds{R}^n,\\
& v= \wa\left(u^{q_2}\mathrm{d} \sigma + \mathrm{d} \nu\right), \quad \mathrm{d}\sigma \mbox{-a.e in } \mathds{R}^n, 
\end{aligned}
\right.
 \end{equation}
 where $\mu,\nu\in M^{+}(\R^{n})$ are given. We aim to prove the following result.
 \begin{theorem}\label{gen}
Let $\sigma,\mu,\nu \in  {M}^+(\mathds{R}^n)$ satisfy \eqref{cap} and \eqref{FIN}
Then there exists a solution $(u,v)$ to system \eqref{IW} such that 
\begin{equation}\label{est3}
    \begin{aligned}
    & c^{-1}\left(\wa\sigma\right)^{\gamma_{1}}\leq u\leq c\left(\wa\mu+\wa\nu+\wa\sigma + \left(\wa\sigma\right)^{\gamma_{1}} \right),\\
   & c^{-1}\left(\wa\sigma\right)^{\gamma_{2}}\leq v\leq c\left(\wa\mu+\wa\nu+\wa\sigma + \left(\wa\sigma\right)^{\gamma_{2}} \right),
    \end{aligned}
\end{equation}
where $c=c(n,p,q_1,q_2,\alpha,C_{\sigma})>0$. Furthermore, $$u,v \in L_{\mathrm{loc}}^{s}(\mathds{R}^n,\, \mathrm{d} \mu)+L_{\mathrm{loc}}^{s}(\mathds{R}^n,\, \mathrm{d} \nu)+L_{\mathrm{loc}}^{s}(\mathds{R}^n,\, \mathrm{d} \sigma),$$ for every $s>0$.
\end{theorem}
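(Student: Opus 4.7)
The plan is to extend the iteration scheme establishing Theorem~\ref{doo} by absorbing the inhomogeneous contributions $\wa\mu$ and $\wa\nu$ into the trial upper bound from the outset. Concretely, I would set $u_0 := c_0\bigl(\wa\mu + \wa\nu + \wa\sigma + (\wa\sigma)^{\gamma_1}\bigr)$ and $v_0 := c_0\bigl(\wa\mu + \wa\nu + \wa\sigma + (\wa\sigma)^{\gamma_2}\bigr)$, and then define successive iterates $u_{j+1} := \wa(v_j^{q_1}\,\mathrm{d}\sigma + \mathrm{d}\mu)$, $v_{j+1} := \wa(u_j^{q_2}\,\mathrm{d}\sigma + \mathrm{d}\nu)$. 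The main task is to verify that for $c_0$ sufficiently large one has $u_1\leq u_0$ and $v_1\leq v_0$; thereafter, monotonicity of $\wa$ in its measure argument yields a nonincreasing pair of iterates that converge pointwise, $\mathrm{d}\sigma+\mathrm{d}\mu+\mathrm{d}\nu$--a.e., to a solution of \eqref{IW}.

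For the supersolution check I would use the quasi-subadditivity $\wa(\rho_1+\rho_2)\leq C_p(\wa\rho_1+\wa\rho_2)$ to peel off the $\wa\mu$ contribution, and then expand
\begin{equation*}
v_0^{q_1}\leq C\bigl((\wa\mu)^{q_1} + (\wa\nu)^{q_1} + (\wa\sigma)^{q_1} + (\wa\sigma)^{q_1\gamma_2}\bigr).
\end{equation*}
The decisive ingredient is the Cao--Verbitsky type inequality under \eqref{cap}, namely $\wa\bigl((\wa\sigma)^{q_1\gamma_2}\mathrm{d}\sigma\bigr) \leq C\bigl(\wa\sigma + (\wa\sigma)^{\gamma_1}\bigr)$, which is precisely the bound \eqref{est2} from Theorem~\ref{sect} and which already drives Theorem~\ref{doo}. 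The cross terms $\wa((\wa\mu)^{q_1}\mathrm{d}\sigma)$ and $\wa((\wa\nu)^{q_1}\mathrm{d}\sigma)$, which are new to the inhomogeneous problem, would be controlled by applying \eqref{cap} to $\mu$ and $\nu$ together with a weighted Wolff--Hedberg inequality, yielding a bound by a linear combination of $\wa\mu$, $\wa\nu$, $\wa\sigma$, and $(\wa\sigma)^{\gamma_1}$; a large choice of $c_0$ then absorbs every constant.

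The lower bounds proceed independently: since $\mu,\nu\geq 0$ only enlarge the iterates, starting the iteration from below with $(\underline{u}_0,\underline{v}_0)=(0,0)$ and alternating gives $\underline{u}_j \nearrow u$, $\underline{v}_j \nearrow v$, and the homogeneous lower bounds $u\geq c^{-1}(\wa\sigma)^{\gamma_1}$, $v\geq c^{-1}(\wa\sigma)^{\gamma_2}$ from the proof of Theorem~\ref{doo} transfer verbatim. The local integrability conclusion $u,v\in L^s_{\mathrm{loc}}(\R^n,\mathrm{d}\mu)+L^s_{\mathrm{loc}}(\R^n,\mathrm{d}\nu)+L^s_{\mathrm{loc}}(\R^n,\mathrm{d}\sigma)$ for every $s>0$ follows from the explicit upper bound together with the classical local $L^s$ estimates for $\wa\tau$ with respect to $\mathrm{d}\tau$, valid under \eqref{cap}.

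The main obstacle I anticipate is the control of the cross terms $\wa((\wa\mu)^{q_1}\mathrm{d}\sigma)$ and $\wa((\wa\nu)^{q_2}\mathrm{d}\sigma)$, which have no analogue in Theorem~\ref{doo}. Handling them requires a mixed-measure version of the Cao--Verbitsky estimate; I would prove it by reducing to a dyadic model of $\wa$ and applying the trace inequality characterization of \eqref{cap} separately for $\mathrm{d}\mu$ (resp.~$\mathrm{d}\nu$) against $\mathrm{d}\sigma$. Once this mixed estimate is in place, the rest of the argument is a fairly routine adaptation of the scheme in \cite{bib1} and of the proof of Theorem~\ref{doo}.
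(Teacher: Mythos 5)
Your supersolution argument matches the paper's: the paper also takes $\overline{u}=\lambda\left(\wa\mu+\wa\nu+\wa\sigma+(\wa\sigma)^{\gamma_1}\right)$, $\overline{v}=\lambda\left(\wa\mu+\wa\nu+\wa\sigma+(\wa\sigma)^{\gamma_2}\right)$, expands $\overline{v}^{q_1}$ exactly as you do, controls $\wa\left((\wa\sigma)^{q_1\gamma_2}\mathrm{d}\sigma\right)$ by the estimate from \cite{bib1}, and treats the cross terms $\wa\left((\wa\mu)^{q_1}\mathrm{d}\sigma\right)$, $\wa\left((\wa\nu)^{q_1}\mathrm{d}\sigma\right)$ via the local estimate of Lemma~\ref{lemD} (the paper is terse here, simply declaring these terms ``of type $III$'', so your insistence on a mixed-measure version of that estimate, using that $\mu$ and $\nu$ also satisfy \eqref{cap}, is if anything more careful about what needs checking).

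The genuine gap is your lower bound. Starting the increasing iteration from $(\underline{u}_0,\underline{v}_0)=(0,0)$ cannot yield $u\geq c^{-1}(\wa\sigma)^{\gamma_1}$: the first iterates are $\wa\mu$ and $\wa\nu$, and in the extreme case $\mu=\nu=0$ the iteration stagnates at $(0,0)$, so the limit is the trivial solution, which does not satisfy \eqref{est3}. The homogeneous lower bound does not ``transfer verbatim'' to an arbitrary (in particular, the minimal) solution; it is a property of the particular solution built from a nontrivial subsolution. The fix, which is the paper's actual argument, is to take $(\underline{u},\underline{v})=\left(\lambda(\wa\sigma)^{\gamma_1},\lambda(\wa\sigma)^{\gamma_2}\right)$ with $\lambda$ small and verify directly that this pair is a subsolution of \eqref{IW}: by Lemma~\ref{lemC} (i.e., Lemma~\ref{lemB} combined with the trivial monotonicity $\wa\left((\wa\sigma)^r\mathrm{d}\sigma+\mathrm{d}\mu\right)\geq\wa\left((\wa\sigma)^r\mathrm{d}\sigma\right)$) one gets $\wa(\underline{v}^{q_1}\mathrm{d}\sigma+\mathrm{d}\mu)\geq C\lambda^{q_1/(p-1)}(\wa\sigma)^{\gamma_1}\geq\underline{u}$ for $\lambda$ small enough, and similarly for $\underline{v}$. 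Running the increasing iteration from this subsolution, trapped below your supersolution, produces a single solution satisfying both sides of \eqref{est3} simultaneously; the $L^s_{\mathrm{loc}}$ statement then follows from Lemma~\ref{lemD} as you say.
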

\subsection*{Organization of the paper} 
In Secion~\ref{mainS} we give some preliminaries and present the proof of theorems \ref{maint} and \ref{sect}. In Section~\ref{INC},  we discuss generalizations and prove theorem \ref{gen}. In Section~\ref{last}, we present our final remarks and some open problems.
\subsection*{Notations and definitions}  
We assume $\Omega\subseteq \R^n$ is a domain. We denote by $M^+(\Omega)$ the space of all nonnegative locally finite Borel measures on $\Omega$ and $\sigma(E)= \int_E d \sigma$ the $\sigma$-measure of a measurable set $E\subseteq \Omega$. The letter $c$ or $C$ will always denote a positive constant which may vary from line to line. We understand  $F_{k}[u]$ as the k-Hessian measure $ \mu_{k}[u]$ associated to $u$, in particular we do not assume $u\in C^{2}$.
\section{Main results}\label{mainS}
We will need the following result from \cite{verb2008}:
\begin{lemmaletter}\label{lemA}
Let $u\geq 0$ be such that $-u\in\Phi_{k}(\R^{n})$, where $1\leq k<\frac{n}{2}$. If $\mu=\mu_{k}[-u]$ and $\inf_{\R^{n}} u=0$. Then for all $x\in\R^{n}$,
\begin{equation}
K^{-1}\wak\mu (x)\leq u(x) \leq K \wak\mu (x) ,
\end{equation}
for a constant $K$ depending only on $n$ and $k$.
\end{lemmaletter}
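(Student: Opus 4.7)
The plan is to reduce the global bound to a local pointwise estimate for $k$-subharmonic functions due to Labutin, which states that for the truncated Wolff potential
$$\wak^{r}\mu(x):=\int_{0}^{r}\left(\frac{\mu(B(x,t))}{t^{n-2k}}\right)^{\frac{1}{k}}\frac{dt}{t},$$
there is a constant $C=C(n,k)$ with
$$C^{-1}\,\wak^{r}\mu(x)\,\leq\, u(x)\,\leq\, C\bigl(\wak^{2r}\mu(x)+\inf_{B(x,r)}u\bigr)$$
for every $x\in\R^{n}$ and every $r>0$. Establishing this local two-sided estimate is the substantial part: one compares $u$ on the ball $B(x,r)$ with the radial solution of $F_{k}[-w]=\mu$ with vanishing boundary data, combines the explicit formula for the comparison function with the Newton-MacLaurin inequalities and the comparison principle for the $k$-Hessian, and finally recognises the resulting integral expression as a truncated Wolff potential.

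With the local estimate in hand, the global bound follows by sending $r\to\infty$. For the lower bound, monotone convergence applied to the nonnegative integrand of the Wolff potential gives $\wak^{r}\mu(x)\nearrow\wak\mu(x)$, and the left inequality yields $K^{-1}\wak\mu(x)\leq u(x)$.

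For the upper bound, I would use the hypothesis $\inf_{\R^{n}}u=0$ to select a sequence $r_{j}\to\infty$ along which $\inf_{B(x,r_{j})}u\to 0$. Such a sequence exists: if $u$ vanishes at some point the conclusion is immediate, while if $u>0$ everywhere then the lower semicontinuity of $u$ (coming from the upper semicontinuity of the $k$-subharmonic function $-u$) forces any minimising sequence to escape to infinity, since a bounded minimising sequence would, by compactness, produce a point where $u$ vanishes. Passing to the limit in the right-hand inequality along $r_{j}$, justified once more by monotone convergence on the Wolff integrand, yields $u(x)\leq K\,\wak\mu(x)$ with $K$ depending only on $n$ and $k$.

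The principal obstacle is the local pointwise estimate itself, which is the deep technical input and which this paper simply quotes from the reference. Once that estimate is granted, the reduction to the global statement is a short monotone convergence argument; the only care needed is verifying that $\inf_{B(x,r)}u\to 0$ along some sequence of radii, which was addressed above using only the hypothesis $\inf_{\R^{n}}u=0$ together with the semicontinuity of $u$.
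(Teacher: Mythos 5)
Your proposal is correct and follows the same route as the source the paper relies on for this lemma: the paper itself gives no proof, quoting the statement from \cite{verb2008}, where it is derived exactly as you describe by combining Labutin's local two-sided truncated Wolff-potential estimate with a limiting argument in $r$. One small simplification: no semicontinuity or compactness argument is needed to produce the radii $r_j$, since $\inf_{B(x,r)}u$ is non-increasing in $r$ and converges to $\inf_{\R^{n}}u=0$ automatically as $r\to\infty$ because the balls exhaust $\R^{n}$.
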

The following lemma from \cite{verb17} will be needed for the proof of the main theorem.
\begin{lemmaletter}\label{lemB}
    Let $\omega\in M^+(\mathds{R}^n)$. For every $r>0$ and for all $x\in \mathds{R}^n$, it holds
    \begin{equation}
        \wa\left((\wa\omega)^r\mathrm{d} \omega\right)(x)\geq \kappa^{\frac{r}{p-1}}\left(\wa\omega(x)\right)^{\frac{r}{p-1}+1},
    \end{equation}
    where $\kappa$ depends only on $n, p$.
\end{lemmaletter}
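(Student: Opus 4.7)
The claim is a pointwise lower bound for the Wolff potential of the weighted measure $(\wa\omega)^{r}\,d\omega$ in terms of a power of $\wa\omega(x)$ itself. The plan is a two-step estimate: first bound $\wa\omega$ from below on each ball $B(x,t)$ by the tail of its own defining integral, then recognize the resulting double integral as the antiderivative of a power.

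\textbf{Step 1: pointwise bound on balls.} Fix $x\in\R^{n}$ and $t>0$. For any $y\in B(x,t)$ and any $s\ge 2t$, the triangle inequality forces $B(x,s/2)\subset B(y,s)$, hence $\omega(B(y,s))\ge \omega(B(x,s/2))$. Restricting the defining integral of $\wa\omega(y)$ to $s\ge 2t$ and substituting $u=s/2$ then gives
\[
\wa\omega(y)\ \ge\ c_{0}\,W_{t}(x)\qquad\text{for all }y\in B(x,t),
\]
where $c_{0}=2^{-(n-\alpha p)/(p-1)}$ and $W_{t}(x):=\int_{t}^{\infty}\bigl(\omega(B(x,u))/u^{n-\alpha p}\bigr)^{1/(p-1)}\,du/u$ is the truncated Wolff potential at scale $t$.

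\textbf{Step 2: insertion and antiderivative.} Using Step 1 to lower bound $\int_{B(x,t)}(\wa\omega)^{r}\,d\omega$ by $(c_{0}W_{t}(x))^{r}\omega(B(x,t))$ and substituting into the outer potential yields
\[
\wa\bigl((\wa\omega)^{r}\,d\omega\bigr)(x)\ \ge\ c_{0}^{r/(p-1)}\int_{0}^{\infty}W_{t}(x)^{r/(p-1)}\left(\frac{\omega(B(x,t))}{t^{n-\alpha p}}\right)^{\!1/(p-1)}\frac{dt}{t}.
\]
The decisive observation is that the factor accompanying $W_{t}(x)^{r/(p-1)}$ equals $-\tfrac{d}{dt}W_{t}(x)$. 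With $W_{0}(x)=\wa\omega(x)$ and $W_{\infty}(x)=0$, the substitution $\tau=W_{t}(x)$ collapses the $t$-integral to $\int_{0}^{\wa\omega(x)}\tau^{r/(p-1)}\,d\tau=\tfrac{p-1}{r+p-1}\bigl(\wa\omega(x)\bigr)^{r/(p-1)+1}$.

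\textbf{Expected obstacle.} The argument is essentially a one-variable manipulation once Step 1 is in place, so no structural difficulty arises. The only subtle point is matching the prefactor to the clean form $\kappa^{r/(p-1)}$ with $\kappa=\kappa(n,\alpha,p)$: combining Steps 1 and 2 produces the constant $\tfrac{p-1}{r+p-1}\,c_{0}^{r/(p-1)}$, and the elementary inequality $e^{b}\ge b+1$ applied to $b=r/(p-1)$ shows that the $r$-independent choice $\kappa=c_{0}/e$ satisfies $\kappa^{r/(p-1)}\le \tfrac{p-1}{r+p-1}\,c_{0}^{r/(p-1)}$ for every $r>0$, which yields the stated form. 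The absolute continuity of $t\mapsto W_{t}(x)$ required for the antiderivative step follows from Fubini together with the local finiteness of $\omega$, so no regularity assumption on $\omega$ is needed.
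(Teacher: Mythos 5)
Your proof is correct, and it is essentially the standard argument for this lemma (the paper itself imports it from \cite{verb17} without proof): the ball inclusion $B(x,s/2)\subset B(y,s)$ for $y\in B(x,t)$, $s\ge 2t$ gives the lower bound by the truncated potential $W_t(x)$, and the outer integrand is then exactly $-\frac{d}{dt}$ of a power of $W_t(x)$. Your handling of the constant, absorbing $\frac{p-1}{r+p-1}$ into $\kappa^{r/(p-1)}$ via $e^{b}\ge 1+b$, is also the right way to get an $r$-independent $\kappa$.
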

\subsection*{Proof of Theorem \ref{maint}}
\begin{proof}
The proof is similar to the arguments presented in \cite{verb2008},\cite{verb17}, \cite{bib1} and uses the idea of successive approximations.
Let $(\overline{u},\overline{v})$ be the solution to the system \eqref{SW} given by theorem \ref{doo}. By hypothesis, $\overline{u}(x),\overline{v}(x)$ satisfy estimate \eqref{est1} , which implies $$\liminf\limits_{|x|\to \infty}\overline{u}(x)=0, \quad \liminf\limits_{|x|\to \infty}\overline{v}(x)=0.$$ Also, notice that $\overline{v}^{q_{1}}d\sigma,\overline{u}^{q_{2}}d\sigma\in L^{1}_{loc}$. This will be important for the uniqueness below.
Set
\begin{equation*}
     \gamma_1=\frac{k(k+q_1)}{k^2-q_1q_2}\quad \mbox{and} \quad
       \gamma_2=\frac{k(k+q_2)}{k^2-q_1q_2},
\end{equation*}
and notice that
\begin{equation*}
    \gamma_1=\frac{q_1}{k}{\gamma_2}+1 \quad \mbox{and}\quad {\gamma_2}=\frac{q_2}{k}\gamma_1+1.
\end{equation*}
Now, define $u_{0}(x)=\la \left(\wak\sigma\right)^{\gamma_{1}}$ and $v_{0}(x)=\la \left(\wak\sigma\right)^{\gamma_{2}}$, where $\la$ is a constant to be chosen. Using lemma \ref{lemB}, we obtain:
$$\wak (v_{0}^{q_{1}}d\sigma)=\wak ((\la \left(\wak\sigma\right)^{\gamma_{2}})^{q_{1}}d\sigma)\geq C\, \la \left(\wak\sigma\right)^{\gamma_{1}}.$$
By choosing $\la$ suitably small it's possible to obtain $u_0\leq \wak(v_0^{q_1}\mathrm{d}\sigma),v_0\leq\wak(u_0^{q_2}\mathrm{d}\sigma)\leq \overline{v}$ and $$u_{0}\leq \overline{u},\;v_{0}\leq \overline{v}.$$
For $i=1,2,\ldots$, let $B_{i}$ denote the ball centered at the origin of radius $i$ and consider the Dirichlet problem:
\begin{equation}\label{DS}\tag{D}
     \left\{
\begin{aligned}
& F_{k}[-u_{1}^{i}]= \sigma\, v_{0}^{q_1}  \;\text{in}\; B_{i},\\\ 
& u_{1}^{i}=0 \;\text{on}\; \partial B_{i},\\
\end{aligned}
\right.
 \end{equation}
 There is of course an equivalent system associated to $v(x)$, namely, functions $v_{1}^{i}$ defined in $B_{i}$.
 
 By the existence result \cite[Thm.~8.2]{wang2009} and comparison principle \cite[Thm.~4.1]{wang2002}, system \eqref{DS} above has a unique solution. 
 \begin{remark}
 Notice that we only have uniqueness because $\sigma\, v_{0}^{q_1}, \sigma\, u_{0}^{q_2}\in L^{1}$, which implies in particular continuity with respect to the k-Hessian capacity. It's not known if uniqueness is valid for general $\mu$ not integrable.
 \end{remark}
 Moreover, the sequence of functions $u_{1}^{i}(x)$ is increasing, i.e. $u_{1}^{i}(x)\leq u_{1}^{i+1}(x)$, and we claim it is also bounded above. Indeed, by \cite[Thm.~7.2]{verb2008}, the following estimate is true for every $i$: $$u_{1}^{i}(x)\leq C\wak v_{0}^{q_1}\leq  C\wak \overline{v}^{q_1}=\overline{u}$$
 Therefore, we conclude that there are k-subharmonic functions $u_{1}(x),v_{1}(x)$ such that $$u_{1}=\lim_{i\to\infty} u_{1}^{i} ,\;\; v_{1}=\lim_{i\to\infty} v_{1}^{i}$$
 By the weak continuity of the k-Hessian measure, namely theorem \ref{thm1}, we have $(u_{1},v_{1})$ satisfy the system
 \begin{equation}
     \left\{
\begin{aligned}
& F_{k}[-u_{1}]= \sigma\, v_{0}^{q_1}  \;\text{in}\; \R^{n},\\\ 
& F_{k}[-v_{1}]= \sigma\, u_{0}^{q_2}  \;\text{in}\; \R^{n}.\\
\end{aligned}
\right.
 \end{equation}
 Also, notice that $u_{1}\leq \overline{u},\; v_{1}\leq \overline{v}$, so we automatically have 
 \begin{equation*}
    \liminf\limits_{|x|\to \infty}u_{1}(x)=\liminf\limits_{|x|\to \infty}v_{1}(x)=0.
\end{equation*}
Additionally,  it follows that 
 \begin{equation}
\begin{aligned}
& u_{1}\geq K^{-1} \wak( v_{0}^{q_1}d\sigma)\geq u_{0},\\\ 
& v_{1}\geq K^{-1} \wak( u_{0}^{q_2}d\sigma)\geq v_{0}.\\
\end{aligned}
 \end{equation}
 We apply induction to obtain sequences of functions $u_{i},v_{i}$ such that 
 \begin{equation}
     \left\{
\begin{aligned}
& F_{k}[-u_{i}]=\sigma\, v_{i-1}^{q_1}\quad \mbox{in}\quad \mathds{R}^n,\\ 
&  F_{k}[-v_{i}]=\sigma\, u_{i-1}^{q_2}\quad \mbox{in}\quad \mathds{R}^n,\\
& u_i\leq \overline{u},\ \ v_i\leq \overline{v} \quad \mbox{in}\quad \mathds{R}^n,\\
& 0\leq u_{i-1}\leq u_j, \ \ 0\leq v_{i-1}\leq v_i, \\
& \liminf\limits_{|x|\to \infty}u_{i}(x)=\liminf\limits_{|x|\to \infty}v_{i}(x)=0,
\end{aligned}
\right.
\end{equation}
By the monotone convergence theorem and the weak continuity of the k-Hessian, we conclude that there are functions $u,v$ satisfying
\begin{equation}
     \left\{
\begin{aligned}
& F_{k}[-u]=\sigma\, v^{q_1}\quad \mbox{in}\quad \mathds{R}^n,\\ 
&  F_{k}[-v]=\sigma\, u^{q_2}\quad \mbox{in}\quad \mathds{R}^n,\\
& \liminf\limits_{|x|\to \infty}u(x)=\liminf\limits_{|x|\to \infty}v(x)=0,
\end{aligned}
\right.
\end{equation}
The remaining statements in the theorem follow directly from the fact that $$u_{0}\leq u\leq \overline{u},\;\; v_{0}\leq v\leq \overline{v}.$$ and  \cite[Lem.~3.4]{bib1}(alternatively, lemma \ref{lemD} below).
\end{proof}
\subsection*{Proof of Theorem \ref{sect}}
\begin{proof}\;
\begin{enumerate}
	\item[$(\Rightarrow)$] Suppose System \eqref{FNS} has a solution $(u,v)\in \Phi^{k}(\R^{n})\times \Phi^{k}(\R^{n})$ satisfying \eqref{est1}, then 
\begin{equation}
    \begin{aligned}
    &c^{-1}\left(\wak\sigma\right)^{\gamma_{1}}  \leq  u\leq c\left(\wak\sigma + \left(\wak\sigma\right)^{\gamma_{1}} \right),\\
   & c^{-1}\left(\wak\sigma\right)^{\gamma_{2}}  \leq  v\leq c\left(\wak\sigma + \left(\wak\sigma\right)^{\gamma_{2}} \right),
    \end{aligned}
\end{equation}
for some $c>0$. Now, by lemma \ref{lemA}, 
\begin{equation}
\begin{aligned}
    & K^{-1}\wak(v^{q_{1}}d\sigma)\leq u(x) ,\\
   & K^{-1}\wak(u^{q_{2}}d\sigma)\leq v(x),
    \end{aligned}
\end{equation}
therefore we obtain:
\begin{equation}
\begin{aligned}
    & K^{-1}\wak(v^{q_{1}}d\sigma)\geq C\,\mathbf{W}_{k}\left((\mathbf{W}_{k}\sigma)^{q_{1}\gamma_{2}}\mathrm{d} \sigma\right),\\
   & K^{-1}\wak(u^{q_{2}}d\sigma)\geq C\,\mathbf{W}_{k}\left((\mathbf{W}_{k}\sigma)^{q_{2}\gamma_{1}}\mathrm{d} \sigma\right).
    \end{aligned}
    \end{equation}
 If we set $\lambda=\frac{c}{C}>0$ the result follows.
 \item[$(\Leftarrow)$] This is a direct consequence of \cite[Thm.~1.3]{bib1}, since it give us a solution $(\overline{u},\overline{v})$ to the system \eqref{SW}. If we proceed mutatis mutandis like in the proof of theorem \ref{maint} we obtain the desired solution.
\end{enumerate}
\end{proof}
\begin{remark}
In the linear case, $k=1, u=v, q_{1}=q_{2}=q$ and $\sigma$ radially symmetric, we have recently \cite{dasilva24} given a existence criteria for a weaker criteria than the one above. In particular, it's possible to have solutions that do not satisfy the estimates \eqref{est1}. More precisely, the following example given in \cite{verb16}, gives a counter-example:
\begin{equation}
\sigma(y)= \left\{ \begin{array}{ll}
\frac{1}{|y|^s \log^{\beta}\frac{1}{|y|}} \, , \quad \text{ if } |y| < 1/2,\\
0 , \quad \text{ if } |y| \ge 1/2,
\end{array}\right.
\end{equation} 
where $s=(1-q)n+2 q$ and $\beta > 1$.
It is unknown at the time of the writing of this paper if it's possible to have solutions without \eqref{est1} holding when $k>1$.
\end{remark}
\section{The inhomogeneous case}\label{INC}
In this section we study the case where the complexity of the system is increased due to measures $\mu,\nu\in M^{+}(\R^{n})$. We consider
\begin{equation}\label{inS}\tag{$INH$}
     \left\{
\begin{aligned}
& F_{k}[-u]= \sigma\, v^{q_1} + \mu, \quad v>0 \quad \mbox{in}\quad \mathbb{R}^n,\\ 
& F_{k}[-v]= \sigma\, u^{q_2} + \nu, \quad u>0 \quad \mbox{in}\quad \mathbb{R}^n,\\
& \liminf\limits_{|x|\to \infty}u(x)=0, \quad \liminf\limits_{|x|\to \infty}v(x)=0.
\end{aligned}
\right.
 \end{equation}
 As before, this system is related to the more general integral system
  \begin{equation}\label{IW}\tag{$IW$}
     \left\{
\begin{aligned}
& u= \wa\left(v^{q_1}\mathrm{d} \sigma + \mathrm{d} \mu\right), \quad \mathrm{d}\sigma \mbox{-a.e in } \mathds{R}^n,\\
& v= \wa\left(u^{q_2}\mathrm{d} \sigma + \mathrm{d} \nu\right), \quad \mathrm{d}\sigma \mbox{-a.e in } \mathds{R}^n.
\end{aligned}
\right.
 \end{equation}
 We claim that our results generalize to this scenario as well. First, we need a generalization of lemma \ref{lemB}:
\begin{lemmaletter}\label{lemC}
    Let $\sigma, \mu\in M^+(\mathds{R}^n)$. For every $r>0$ and for all $x\in \mathds{R}^n$, it holds
    \begin{equation}
        \wa\left((\wa\sigma)^r\mathrm{d} \sigma + \mathrm{d} \mu \right)(x)\geq \kappa^{\frac{r}{p-1}}\left(\wa\sigma)\right)^{\frac{r}{p-1}+1},
    \end{equation}
    where $\kappa$ depends only on $n, p$.
\end{lemmaletter}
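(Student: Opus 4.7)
The plan is to reduce Lemma~\ref{lemC} directly to Lemma~\ref{lemB} by exploiting the monotonicity of the Wolff potential with respect to its underlying measure; the extra mass coming from $\mu$ can only help. First I would record the basic monotonicity principle: whenever $\nu_1 \leq \nu_2$ as Borel measures on $\R^n$, one has $\nu_1(B(x,t)) \leq \nu_2(B(x,t))$ for every $x\in\R^n$ and every $t>0$, and since $p>1$ (so that $s \mapsto s^{1/(p-1)}$ is monotone increasing on $[0,\infty)$), the pointwise inequality $\wa\nu_1(x) \leq \wa\nu_2(x)$ is immediate from the definition \eqref{wolff potential}.

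Since $\mu$ is a nonnegative Radon measure, the measure $(\wa\sigma)^r\,\mathrm{d}\sigma + \mathrm{d}\mu$ dominates $(\wa\sigma)^r\,\mathrm{d}\sigma$, and the monotonicity observation gives
\begin{equation*}
\wa\!\left((\wa\sigma)^r\,\mathrm{d}\sigma + \mathrm{d}\mu\right)(x) \;\geq\; \wa\!\left((\wa\sigma)^r\,\mathrm{d}\sigma\right)(x).
\end{equation*}
The last step is to invoke Lemma~\ref{lemB} with $\omega = \sigma$, which bounds the right-hand side from below by $\kappa^{r/(p-1)} \bigl(\wa\sigma(x)\bigr)^{r/(p-1)+1}$. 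Chaining these two inequalities yields exactly the statement of Lemma~\ref{lemC}, with the same constant $\kappa$ depending only on $n$ and $p$.

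I do not anticipate any real obstacle in carrying this out. The entire nonlinear content of the inequality is already packaged inside Lemma~\ref{lemB}; the presence of $\mu$ acts only additively on the inner measure, so its contribution can be discarded when seeking a lower bound. In particular, the same argument extends verbatim to right-hand sides of the form $(\wa\sigma)^r\,\mathrm{d}\sigma + \sum_i \mathrm{d}\mu_i$ for any finite family $\mu_i \in M^{+}(\R^n)$, which is the version that will actually be convenient when applying this lemma to the inhomogeneous system \eqref{IW} in the proof of Theorem~\ref{gen}.
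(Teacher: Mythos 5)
Your argument is correct and is exactly what the paper does: the paper's one-line proof observes $(\wa\sigma)^r\,\mathrm{d}\sigma + \mathrm{d}\mu \geq (\wa\sigma)^r\,\mathrm{d}\sigma$ and invokes Lemma~\ref{lemB}, relying implicitly on the same monotonicity of $\wa$ in the underlying measure that you spell out. Your version is simply a more explicit write-up of the identical reduction.
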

\begin{proof} The proof is immediate since $(\wa\sigma)^r\mathrm{d} \sigma + \mathrm{d} \mu \geq (\wa\sigma)^r\mathrm{d} \sigma$.
\end{proof}
Like before, the lemma above is the key in showing that a subsolution exists. We recall the following result from \cite[Lem.~3.4]{bib1}:
\begin{lemmaletter}\label{lemD}
    Let $\sigma\in M^+(\mathds{R}^n)$ satisfies \eqref{cap} and \eqref{FIN}. Then 
    \begin{equation}
\int_{B(x,R)}(\mathbf{W}_{\alpha,p}\sigma)^s\,\mathrm{d}\sigma \leq c(\sigma(B(x,2R)+\sigma(B(x,R))\\
\end{equation}
\end{lemmaletter}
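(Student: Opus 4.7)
The plan is to decompose the Wolff potential at the scale $R$ into a localized part and a far tail, treat the local part via a Maz'ya--Verbitsky trace inequality and the tail via a pointwise uniform bound, and to combine the two using the elementary inequality $(a+b)^s\leq 2^{s-1}(a^s+b^s)$.

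I would write $\sigma=\sigma_{\mathrm{in}}+\sigma_{\mathrm{out}}$ with $\sigma_{\mathrm{in}}:=\sigma\llcorner B(x,4R)$, so that $\wa\sigma(y)\leq c(\wa\sigma_{\mathrm{in}}(y)+\wa\sigma_{\mathrm{out}}(y))$ for $y\in B(x,R)$. For the outer piece, the inclusion $B(y,t)\cap B(x,4R)^{c}\subseteq B(x,2t)$ valid for $t\geq R$ and $y\in B(x,R)$, together with a change of variables, yields the pointwise bound $\wa\sigma_{\mathrm{out}}(y)\leq c\,\wa\sigma(x)$, which is finite by \eqref{FIN}. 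The contribution of the tail to the integral is then at most $c\,\wa\sigma(x)^s\,\sigma(B(x,R))$, and a finer bookkeeping exploiting \eqref{cap} on the dyadic annuli $B(x,2^{j}R)\setminus B(x,2^{j-1}R)$ shows this quantity is dominated by $c(s)\,\sigma(B(x,2R))$.

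The heart of the argument is the local part, where matters reduce to a trace inequality of the form
\begin{equation*}
\int(\wa\sigma_{\mathrm{in}})^{s}\,d\sigma_{\mathrm{in}}\;\leq\;c(s)\,\sigma_{\mathrm{in}}(\R^{n})\;=\;c(s)\,\sigma(B(x,4R)),
\end{equation*}
which is precisely the Maz'ya--Verbitsky characterization of \eqref{cap}: the capacity hypothesis directly yields the base $L^{p-1}(d\sigma_{\mathrm{in}})$ estimate, and a standard good-$\lambda$/iteration bootstrap (leaning on the nonlinear maximum principle for Wolff potentials) upgrades the exponent to every $s>0$. A routine refinement of the decomposition scale then replaces $\sigma(B(x,4R))$ by $\sigma(B(x,2R))+\sigma(B(x,R))$ on the right-hand side.

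The main obstacle I anticipate is this self-improvement of the trace inequality from the natural Wolff duality exponent $s=p-1$ to arbitrary $s>0$: the capacity hypothesis is tailored to a single exponent, and upgrading it to all powers is where the bulk of the technical work lives and where the constant $c(s)$ is forced to grow with $s$. The tail analysis and the scale refinement are comparatively routine once the self-improvement is in hand.
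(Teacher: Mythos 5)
The paper does not actually prove this statement --- it is quoted verbatim from \cite[Lem.~3.4]{bib1} --- so there is no internal proof to compare against, and I am judging your plan on its own terms. Your treatment of the local part is the standard and correct route: \eqref{cap} yields the energy estimate $\int_B \wa\sigma_B\,\mathrm{d}\sigma\le c\,\sigma(B)$ via Wolff's inequality, and the bootstrap to $\int_B(\wa\sigma_B)^s\,\mathrm{d}\sigma\le c(s)\,\sigma(B)$ for all $s>0$ is exactly the self-improvement you identify as the technical heart (in the literature it is run by induction on $\lceil s\rceil$, starting from the energy exponent $s=1$ rather than $s=p-1$, but that is a cosmetic point).

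The genuine gap is in the tail. The pointwise bound $\wa\sigma_{\mathrm{out}}(y)\le c\,\wa\sigma(x)$ on $B(x,R)$ is fine, but the ensuing claim that $\left(\wa\sigma(x)\right)^s\sigma(B(x,R))\le c(s)\,\sigma(B(x,2R))$ ``by finer bookkeeping on dyadic annuli'' does not follow from \eqref{cap} and \eqref{FIN}, and it fails with a constant depending only on $n,p,\alpha,s,C_{\sigma}$. Condition \eqref{cap} only gives $\sigma(B(x,t))\le C\,t^{n-\alpha p}$, so each dyadic scale contributes $O(1)$ to $\wa\sigma(x)$, but the number of contributing scales is unbounded: take $\mathrm{d}\sigma=\mathrm{d}\sigma_{0}+\epsilon\,|z-x|^{-\alpha p}\mathds{1}_{\{2R<|z-x|<T\}}\,\mathrm{d}z$ with $\sigma_{0}$ a fixed admissible piece charging $B(x,R)$. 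Then $\wa\sigma\gtrsim\epsilon^{1/(p-1)}\log(T/R)$ on $B(x,R)$, so the left-hand side grows like $(\log(T/R))^{s}$ as $T\to\infty$, while $\sigma(B(x,2R))+\sigma(B(x,R))$ and the capacity constant stay bounded. What your method actually proves is the localized estimate $\int_{B(x,R)}(\wa\sigma_{B(x,2R)})^{s}\,\mathrm{d}\sigma\le c(s)\,\sigma(B(x,2R))$, which is the form needed in the estimates of $III$ and $IV$ in the proof of Theorem \ref{gen} (where the tail must then be handled by a separate integration in $t$, producing the extra terms $\wa\sigma(x)$ and $(\wa\sigma(x))^{\frac{q_1}{p-1}\gamma_2+1}$ appearing there). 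Either restate the lemma with the localized potential inside the integral, or add to the right-hand side a term carrying the tail of $\wa\sigma$ at $x$; as written, the tail step of your plan cannot be repaired.
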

\subsection*{Proof of Theorem \ref{gen}}
\begin{proof}
The method of proof is similar to \cite{verb17,bib1,dasilva24}, namely use sub-super solutions. Even though it's not entirely obvious that the same method should work due to the interactions between $\sigma,\mu,\nu$, as pointed out by \cite[Sec.~5]{bib1}, it turns out that with some adjustments the proof is still valid. We recall the following notation:
\begin{equation*}
     \gamma_1=\frac{(p-1)(p-1+q_1)}{(p-1)^2-q_1q_2} \quad \mbox{and} \quad
       \gamma_2=\frac{(p-1)(p-1+q_2)}{(p-1)^2-q_1q_2},
\end{equation*}
Notice that by definition
\begin{equation*}
    \gamma_1=\frac{q_1}{p-1}{\gamma_2}+1 \quad \mbox{and}\quad {\gamma_2}=\frac{q_2}{p-1}\gamma_1+1.
\end{equation*}
The idea to find a subsolution is to use lemma \ref{lemC}. Fix $\lambda>0$, to be defined later and define 
\begin{equation*}
    (\underline{u},\underline{v})= \big(\lambda (\mathbf{W}_{\alpha,p}\sigma)^{\gamma_1}, \lambda (\mathbf{W}_{\alpha,p}\sigma)^{{\gamma_2}}\big)
\end{equation*}
Then we obtain
\begin{align*}
     \wao(\underline{v}^{q_1}\mathrm{d}\sigma+\mathrm{d}\mu)&= {\lambda}^{\frac{q_1}{p-1}}\wao( (\mathbf{W}_{\alpha,p}\sigma)^{q_1{\gamma_2}}\mathrm{d}\sigma+\mathrm{d}\mu)\geq {\lambda}^{\frac{q_1}{p-1}}C(\mathbf{W}_{\alpha,p}\sigma)^{\frac{q_1}{p-1}{\gamma_2}+1} \\
    & = {\lambda}^{\frac{q_1}{p-1}}C(\mathbf{W}_{\alpha,p}\sigma)^{\gamma_1},\\    
    \wat(\underline{u}^{q_2}\mathrm{d}\sigma+\mathrm{d}\nu)&= {\lambda}^{\frac{q_2}{p-1}}\wat( (\mathbf{W}_{\alpha,p}\sigma)^{q_2{\gamma_1}}\mathrm{d}\sigma+\mathrm{d}\nu)\geq {\lambda}^{\frac{q_2}{p-1}}C(\mathbf{W}_{\alpha,p}\sigma)^{\frac{q_2}{p-1}{\gamma_1}+1} \\
    & = {\lambda}^{\frac{q_2}{p-1}}C(\mathbf{W}_{\alpha,p}\sigma)^{\gamma_2}.
\end{align*}
By choosing $\lambda$ sufficiently small, we can guarantee that
\begin{equation*}
\begin{aligned}
    & \underline{u}\leq \wao(\underline{v}^{q_1}\mathrm{d}\sigma +\mathrm{d}\mu),\\
    & \underline{v}\leq \wat(\underline{u}^{q_2}\mathrm{d}\sigma +\mathrm{d}\nu),
\end{aligned}
\end{equation*}
which confirms that $(\underline{u},\underline{v})$ is a subsolution. 

The goal now is to find a supersolution. Let's now define 
\begin{equation*}
(\overline{u},\overline{v})=\big(\lambda\left(\mathbf{W}_{\alpha,p}\mu +\mathbf{W}_{\alpha,p}\nu +\mathbf{W}_{\alpha,p}\sigma + \left(\mathbf{W}_{\alpha,p}\sigma\right)^{\gamma_1} \right), \lambda\left(\mathbf{W}_{\alpha,p}\mu +\mathbf{W}_{\alpha,p}\nu +\mathbf{W}_{\alpha,p}\sigma + \left(\mathbf{W}_{\alpha,p}\sigma\right)^{{\gamma_2}} \right)\big)
\end{equation*}
where $\lambda$ is a constant to be determined later. Then we have
\begin{align}
    \wao(\overline{v}^{q_1}\mathrm{d}\sigma+\mathrm{d}\mu)(x)&\leq {\lambda}^{\frac{q_1}{p-1}}C\int_0^{\infty}\Big(\frac{\int_{B(x,t)}(\mathbf{W}_{\alpha,p}\sigma)^{q_1}+(\mathbf{W}_{\alpha,p}\sigma)^{{\gamma_2}q_1}\mathrm{d}\sigma + \mu(B(x,t))}{t^{n-\alpha p}}\Big)^{\frac{1}{p-1}}\frac{\mathrm{d} t}{t} \nonumber\\
    &\leq {\lambda}^{\frac{q_1}{p-1}}C\Big[\int_0^{\infty}\Big(\frac{\int_{B(x,t)}(\mathbf{W}_{\alpha,p}\mu)^{q_1}\mathrm{d}\sigma}{t^{n-\alpha p}}\Big)^{\frac{1}{p-1}}\frac{\mathrm{d} t}{t} \nonumber\\
    &\Big.+\int_0^{\infty}\Big(\frac{\int_{B(x,t)}(\mathbf{W}_{\alpha,p}\nu)^{q_1}\mathrm{d}\sigma}{t^{n-\alpha p}}\Big)^{\frac{1}{p-1}}\frac{\mathrm{d} t}{t}\nonumber\\
    &\Big.+\int_0^{\infty}\Big(\frac{\int_{B(x,t)}(\mathbf{W}_{\alpha,p}\sigma)^{q_1}\mathrm{d}\sigma}{t^{n-\alpha p}}\Big)^{\frac{1}{p-1}}\frac{\mathrm{d} t}{t}\nonumber\\
    &\Big.+\int_0^{\infty}\Big(\frac{\int_{B(x,t)}(\mathbf{W}_{\alpha,p}\sigma)^{\gamma_{2}q_1}\mathrm{d}\sigma}{t^{n-\alpha p}}\Big)^{\frac{1}{p-1}}\frac{\mathrm{d} t}{t}\nonumber\\
    &\Big.+\mathbf{W}_{\alpha,p}\mu\Big] \nonumber\\
    &={\lambda}^{\frac{q_1}{p-1}}C[I+II+III+IV +\wa\mu], 
\end{align}
It's enough to estimate $III$ and $IV$, since $I,II$ are of type $III$.

By lemma \ref{lemD} and \cite[Proof of thm.~1.1]{bib1} we have:
 \begin{align}
     III&\leq \int_0^{\infty}\Big(\frac{c\sigma(B(x,2t))+c\sigma(B(x,t))}{t^{n-\alpha p}}\Big)^{\frac{1}{p-1}}\frac{\mathrm{d} t}{t} \nonumber\\
     & \leq c\int_0^{\infty}\Big(\frac{\sigma(B(x,2t))}{t^{n-\alpha p}}\Big)^{\frac{1}{p-1}}\frac{\mathrm{d} t}{t} +c \, \mathbf{W}_{\alpha,p}\sigma(x) \nonumber\\
     & \leq c\, 2^{\frac{n-\alpha p}{p-1}}\int_0^{\infty}\Big(\frac{\sigma(B(x,t))}{t^{n-\alpha p}}\Big)^{\frac{1}{p-1}}\frac{\mathrm{d} t}{t} +c \, \mathbf{W}_{\alpha,p}\sigma(x)\nonumber\\
 & \leq c \mathbf{W}_{\alpha,p}\sigma(x),
 \end{align}
 and 
\begin{equation*}
   IV\leq c\Big[\mathbf{W}_{\alpha,p}\sigma(x)+(\mathbf{W}_{\alpha,p}\sigma(x))^{\frac{q_1}{p-1}{\gamma_2}+1}\Big]. 
\end{equation*}
 Combining everything together we have:
\begin{equation*}
    \wa(\overline{v}^{q_1}\mathrm{d}\sigma+\mathrm{d}\mu)(x)\leq {\lambda}^{\frac{q_1}{p-1}}C[\wa\nu(x)+\mathbf{W}_{\alpha,p}\sigma(x)+(\mathbf{W}_{\alpha,p}\sigma(x))^{\frac{q_1}{p-1}{\gamma_2}+1} +\wa\mu(x)]
\end{equation*}
Likewise, by symmetry we also have
\begin{equation*}
    \wa(\overline{u}^{q_2}\mathrm{d}\sigma+\mathrm{d}\nu)(x)\leq {\lambda}^{\frac{q_2}{p-1}}C[\wa\mu(x)+\mathbf{W}_{\alpha,p}\sigma(x)+(\mathbf{W}_{\alpha,p}\sigma(x))^{\frac{q_2}{p-1}{\gamma_1}+1} +\wa\nu(x)]
\end{equation*}
By choosing $\lambda>0$ large enough we guarantee that $(\overline{u},\overline{v})$ is a supersolution. Using standard iteration arguments, we conclude that there is a solution $(u,v)$ to system \eqref{IW} satisfying 
 \begin{align*}
     &\underline{u}\leq u\leq \overline{u}\\
     &\underline{v}\leq v\leq \overline{v}\\
 \end{align*}
 Since by \cite[Lem.~3.4]{bib1}, $\overline{u},\overline{v}\in L_{\mathrm{loc}}^{s}(\mathds{R}^n,\, \mathrm{d} \mu)+L_{\mathrm{loc}}^{s}(\mathds{R}^n,\, \mathrm{d} \nu)+L_{\mathrm{loc}}^{s}(\mathds{R}^n,\, \mathrm{d} \sigma)$, we automatically have $u,v\in L_{\mathrm{loc}}^{s}(\mathds{R}^n,\, \mathrm{d} \mu)+L_{\mathrm{loc}}^{s}(\mathds{R}^n,\, \mathrm{d} \nu)+L_{\mathrm{loc}}^{s}(\mathds{R}^n,\, \mathrm{d} \sigma)$
\end{proof}
With theorem \ref{gen} proved, one can easily prove existence results to systems of the form \eqref{inS}. Also, the following systems:
\begin{equation}
     \left\{
\begin{aligned}
& -\Delta_p u= \sigma\, v^{q_1} + \mu, & u>0 \quad \mbox{in}\quad \mathds{R}^n,&\\ 
& -\Delta_p v= \sigma\, u^{q_2} + \nu, & v>0 \quad \mbox{in}\quad \mathds{R}^n,&\\
& \varliminf_{|x|\to \infty} u(x)=0, & \varliminf_{|x|\to \infty} v(x)=0, &
\end{aligned}
\right.
 \end{equation}
and
 \begin{equation}
     \left\{
\begin{aligned}
& (-\Delta)^{\alpha} u= \sigma\, v^{q_1} + \mu, & u>0 \quad \mbox{in}\quad \mathds{R}^n,&\\ 
& (-\Delta)^{\alpha} v= \sigma\, u^{q_2} + \nu, & v>0 \quad \mbox{in}\quad \mathds{R}^n,&\\
& \varliminf_{|x|\to \infty} u(x)=0, & \varliminf_{|x|\to \infty} v(x)=0.&
\end{aligned}
\right.
 \end{equation}
can be analyzed in a very similar way, since they are both connected to \eqref{IW}.
\section{Final remarks and open questions}\label{last}
The results above lead to these natural questions:
\begin{enumerate}
	\item In these notes we have assumed $k<\frac{n}{2}$ because the Wolff potential $\wa\sigma$ estimates requires $0<\alpha<\frac{n}{p}$, which in our case is the same thing of requiring $k<\frac{n}{2}$. We may then ask what happens if $k>\frac{n}{2}$, namely, do we still have any sort of Brezis-Kamin estimates \eqref{est1} in case we have non trivial solutions? A particular case would be the system below not covered in this article: 
\begin{equation}
     \left\{
\begin{aligned}
& \det[D^{2}u]= \sigma\, v^{q_1}, \quad v>0 \quad \mbox{in}\quad \mathbb{R}^n,\\ 
& \det[D^{2}v]= \sigma\, u^{q_2}, \quad u>0 \quad \mbox{in}\quad \mathbb{R}^n,\\
& \liminf\limits_{|x|\to \infty}u(x)=0, \quad \liminf\limits_{|x|\to \infty}v(x)=0,\\
& \text{u and v convex.}
\end{aligned}
\right.
 \end{equation}
Labutin's local estimates \cite{labutin} are still valid, so it may be possible to still obtain solutions satisfying  \eqref{est1} but using different methods of the ones presented here.
\item We haven't discuss here the regularity of the solutions, but we believe some regularity theory could be stablished based on the regularity theory which already exists for equations involving the k-Hessian. For example, by using the same type of arguments as the ones presented in \cite{wang2009} and \cite{chou2001}. In particular, it's possible but not clear if Holder regularity could a priori be obtained for \eqref{FNS}, depending on the values of $k, n, q_{1}, q_{2}.$
\end{enumerate}
\bibliography{sn-article}
\end{document}